\newcommand{\be}{\begin{equation}}
\newcommand{\ee}{\end{equation}}
\newcommand{\bea}{\begin{eqnarray*}}
\newcommand{\eea}{\end{eqnarray*}}
\newcommand{\ba}{\begin{array}}
\newcommand{\ea}{\end{array}}
\newcommand{\bi}{\begin{itemize}}
\newcommand{\ei}{\end{itemize}}
\newcommand{\bc}{\begin{center}}
\newcommand{\ec}{\end{center}}
\newcommand{\bfr}{\begin{flushright}}
\newcommand{\efr}{\end{flushright}}
\begin{document}

\title{Smarandache's Cevian Triangle Theorem in The Einstein Relativistic
Velocity Model of Hyperbolic Geometry}
\author{\textbf{C\u{a}t\u{a}lin Barbu}}
\date{{\small "Vasile Alecsandri" College - Bac\u{a}u, str. Iosif Cocea, nr.
12, sc. A, ap. 13, Romania}\\
$\ \ \ $\ {\small kafka\_mate@yahoo.com}}
\maketitle

\begin{abstract}
In this note, we present a proof of Smarandache's cevian triangle hyperbolic
theorem in the Einstein relativistic velocity model of hyperbolic geometry.
\end{abstract}

\bigskip 

\textbf{2000 Mathematical Subject Classification}: 51K05, 51M10, 30F45,
20N99, 51B10

\textbf{Keywords and phrases}: hyperbolic geometry, hyperbolic triangle,
Smarandache's cevian triangle, gyrovector, Einstein relativistic velocity
model

\bigskip

\textbf{1. \ Introduction}

Hyperbolic geometry appeared in the first half of the $19^{th}$ century as
an attempt to understand Euclid's axiomatic basis for geometry. It is also
known as a type of non-Euclidean geometry, being in many respects similar to
Euclidean geometry. Hyperbolic geometry includes such concepts as: distance,
angle and both of them have many theorems in common.There are known many
main models for hyperbolic geometry, such as: Poincar\'{e} disc model,
Poincar\'{e} half-plane, Klein model, Einstein relativistic velocity model,
etc. The hyperbolic geometry is a non-Euclidian geometry. Here, in this
study, we present a proof of Smarandache's cevian triangle hyperbolic
theorem in the Einstein relativistic velocity model of hyperbolic geometry.
Smarandache's cevian triangle theorem states that if $A_{1}B_{1}C_{1}$ is
the cevian triangle of point $P$ with respect to the triangle $ABC,$ then $%
\frac{PA}{PA_{1}}\cdot \frac{PB}{PB_{1}}\cdot \frac{PC}{PC_{1}}=\frac{%
AB\cdot BC\cdot CA}{A_{1}B\cdot B_{1}C\cdot C_{1}A}$ [1].

Let $D$ denote the complex unit disc in complex $z$ - plane, i.e.%
\begin{equation*}
D=\{z\in 
\mathbb{C}
:\left\vert z\right\vert <1\}. 
\end{equation*}%
The most general M\"{o}bius transformation of $D$ is%
\begin{equation*}
z\rightarrow e^{i\theta }\frac{z_{0}+z}{1+\overline{z_{0}}z}=e^{i\theta
}(z_{0}\oplus z), 
\end{equation*}%
which induces the M\"{o}bius addition $\oplus $ in $D$, allowing the M\"{o}%
bius transformation of the disc to be viewed as a M\"{o}bius left
gyrotranslation%
\begin{equation*}
z\rightarrow z_{0}\oplus z=\frac{z_{0}+z}{1+\overline{z_{0}}z} 
\end{equation*}%
followed by a rotation. Here $\theta \in 
\mathbb{R}
$ is a real number, $z,z_{0}\in D,$ and $\overline{z_{0}}$ is the complex
conjugate of $z_{0}.$ Let $Aut(D,\oplus )$ be the automorphism group of the
grupoid $(D,\oplus )$. If we define%
\begin{equation*}
gyr:D\times D\rightarrow Aut(D,\oplus ),gyr[a,b]=\frac{a\oplus b}{b\oplus a}=%
\frac{1+a\overline{b}}{1+\overline{a}b}, 
\end{equation*}%
then is true gyrocommutative law%
\begin{equation*}
a\oplus b=gyr[a,b](b\oplus a). 
\end{equation*}

A gyrovector space $(G,\oplus ,\otimes )$ is a gyrocommutative gyrogroup $%
(G,\oplus )$ that obeys the following axioms:

\textit{(1) }$gyr[\mathbf{u,v]a\cdot }$ $gyr[\mathbf{u,v]b=a\cdot b}$\ \ for
all points $\mathbf{a,b,u,v\in }G.$

\textit{(2) }$G$ admits a scalar multiplication, $\otimes $, possessing the
following properties. For all real numbers $r,r_{1},r_{2}\in 
\mathbb{R}
$ and all points $\mathbf{a\in }G$:

\textit{(G1) }$1\otimes \mathbf{a=a}$

\textit{(G2) }$(r_{1}+r_{2})\otimes \mathbf{a}=r_{1}\otimes \mathbf{a}\oplus
r_{2}\otimes \mathbf{a}$

\textit{(G3) }$(r_{1}r_{2})\otimes \mathbf{a}=r_{1}\otimes (r_{2}\otimes 
\mathbf{a)}$

\textit{(G4) }$\frac{\left\vert r\right\vert \otimes \mathbf{a}}{\left\Vert
r\otimes \mathbf{a}\right\Vert }=\frac{\mathbf{a}}{\left\Vert \mathbf{a}%
\right\Vert }$

\textit{(G5) }$gyr[\mathbf{u,v}](r\otimes \mathbf{a)}=r\otimes gyr[\mathbf{%
u,v}]\mathbf{a}$

\textit{(G6) }$gyr[r_{1}\otimes \mathbf{v},r_{1}\otimes \mathbf{v]=}\mathit{1%
}$

\textit{(3) }Real vector space structure $(\left\Vert G\right\Vert ,\oplus
,\otimes )$ for the set $\left\Vert G\right\Vert $ of onedimensional
"vectors"%
\begin{equation*}
\left\Vert G\right\Vert =\{\pm \left\Vert \mathbf{a}\right\Vert :\mathbf{a}%
\in G\}\subset 
\mathbb{R}
\end{equation*}%
with vector addition $\oplus $ and scalar multiplication $\otimes ,$ such
that for all $r\in 
\mathbb{R}
$ and $\mathbf{a,b}\in G,$

\textit{(G7) }$\left\Vert r\otimes \mathbf{a}\right\Vert =\left\vert
r\right\vert \otimes \left\Vert \mathbf{a}\right\Vert $

\textit{(G8) }$\left\Vert \mathbf{a}\oplus \mathbf{b}\right\Vert \leq
\left\Vert \mathbf{a}\right\Vert \oplus \left\Vert \mathbf{b}\right\Vert $

\begin{theorem}
\textbf{(The Hyperbolic Theorem of Ceva in Einstein Gyrovector Space) }Let $%
\mathbf{a}_{1},\mathbf{a}_{2},$ and $\mathbf{a}_{3}$ be three
non-gyrocollinear points in an Einstein gyrovector space $(V_{s},\oplus
,\otimes ).$ Furthermore, let $\mathbf{a}_{123}$ be a point in their
gyroplane, which is off the gyrolines $\mathbf{a}_{1}\mathbf{a}_{2},\mathbf{a%
}_{2}\mathbf{a}_{3},$ and $\mathbf{a}_{3}\mathbf{a}_{1}.$ If $\mathbf{a}_{1}%
\mathbf{a}_{123}$ meets $\mathbf{a}_{2}\mathbf{a}_{3}$ at $\mathbf{a}_{23},$
etc., then 
\begin{equation*}
\frac{\gamma _{\ominus \mathbf{a}_{1}\oplus \mathbf{a}_{12}}\left\Vert
\ominus \mathbf{a}_{1}\oplus \mathbf{a}_{12}\right\Vert }{\gamma _{\ominus 
\mathbf{a}_{2}\oplus \mathbf{a}_{12}}\left\Vert \ominus \mathbf{a}_{2}\oplus 
\mathbf{a}_{12}\right\Vert }\frac{\gamma _{\ominus \mathbf{a}_{2}\oplus 
\mathbf{a}_{23}}\left\Vert \ominus \mathbf{a}_{2}\oplus \mathbf{a}%
_{23}\right\Vert }{\gamma _{\ominus \mathbf{a}_{3}\oplus \mathbf{a}%
_{23}}\left\Vert \ominus \mathbf{a}_{3}\oplus \mathbf{a}_{23}\right\Vert }%
\frac{\gamma _{\ominus \mathbf{a}_{3}\oplus \mathbf{a}_{13}}\left\Vert
\ominus \mathbf{a}_{3}\oplus \mathbf{a}_{13}\right\Vert }{\gamma _{\ominus 
\mathbf{a}_{1}\oplus \mathbf{a}_{13}}\left\Vert \ominus \mathbf{a}_{1}\oplus 
\mathbf{a}_{13}\right\Vert }=1, 
\end{equation*}%
(here $\gamma _{\mathbf{v}}=\frac{1}{\sqrt{1-\frac{\left\Vert \mathbf{v}%
\right\Vert ^{2}}{s^{2}}}}$ is the gamma factor).
\end{theorem}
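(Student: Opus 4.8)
The plan is to mimic the classical barycentric proof of Ceva's theorem, replacing ordinary barycentric coordinates by Ungar's gyrobarycentric coordinates in the Einstein gyrovector space, so that the whole identity collapses to an algebraic cancellation of three coordinate ratios. First I would fix gyrobarycentric coordinates $(m_1:m_2:m_3)$, $m_i>0$, for the interior point $\mathbf{a}_{123}$ relative to the gyrotriangle $\mathbf{a}_1\mathbf{a}_2\mathbf{a}_3$, writing
\[
\mathbf{a}_{123}=\frac{\sum_{i=1}^{3}m_i\gamma_{\mathbf{a}_i}\mathbf{a}_i}{\sum_{i=1}^{3}m_i\gamma_{\mathbf{a}_i}},
\]
where the operations on the right are those of the ambient vector space $V_s$. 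The next step is to record that the cevian foot $\mathbf{a}_{jk}=\mathbf{a}_i\mathbf{a}_{123}\cap\mathbf{a}_j\mathbf{a}_k$ inherits the gyrobarycentric coordinates of $\mathbf{a}_{123}$ with the $i$-th entry deleted; e.g.\ $\mathbf{a}_{23}$ has coordinates $(0:m_2:m_3)$, $\mathbf{a}_{13}$ has $(m_1:0:m_3)$, and $\mathbf{a}_{12}$ has $(m_1:m_2:0)$. This is the gyro-analogue of the elementary fact that a cevian through $\mathbf{a}_i$ meets the opposite side in the point that simply omits the $i$-th weight.

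The heart of the argument is the identity, valid in any Einstein gyrovector space, that expresses the gamma factor of a gyrobarycentric point $\mathbf{P}$ in terms of the vertex gamma factors:
\[
\gamma_{\ominus\mathbf{a}_i\oplus\mathbf{P}}=\frac{1}{m_0}\sum_{j}m_j\,\gamma_{\ominus\mathbf{a}_i\oplus\mathbf{a}_j},
\]
where $m_0$ is the normalization constant of that point. Applying this to the foot $\mathbf{a}_{23}$, whose only nonzero weights are $m_2,m_3$, and abbreviating $\gamma_{jk}=\gamma_{\ominus\mathbf{a}_j\oplus\mathbf{a}_k}$ (so $\gamma_{jj}=1$ and $\gamma_{jk}=\gamma_{kj}$), a short computation using $m_0^2=m_2^2+m_3^2+2m_2m_3\gamma_{23}$ gives
\[
\gamma_{\ominus\mathbf{a}_2\oplus\mathbf{a}_{23}}^2-1=\frac{m_3^2(\gamma_{23}^2-1)}{m_0^2},\qquad \gamma_{\ominus\mathbf{a}_3\oplus\mathbf{a}_{23}}^2-1=\frac{m_2^2(\gamma_{23}^2-1)}{m_0^2}.
\]
Combined with the scalar identity $\gamma_{\mathbf{v}}\|\mathbf{v}\|=s\sqrt{\gamma_{\mathbf{v}}^2-1}$ (immediate from the definition of the gamma factor), this yields the clean ratio
\[
\frac{\gamma_{\ominus\mathbf{a}_2\oplus\mathbf{a}_{23}}\|\ominus\mathbf{a}_2\oplus\mathbf{a}_{23}\|}{\gamma_{\ominus\mathbf{a}_3\oplus\mathbf{a}_{23}}\|\ominus\mathbf{a}_3\oplus\mathbf{a}_{23}\|}=\frac{m_3}{m_2},
\]
since the common factor $s\sqrt{\gamma_{23}^2-1}/m_0$ cancels.

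The same computation performed at $\mathbf{a}_{12}$ and $\mathbf{a}_{13}$ gives the analogous ratios $m_2/m_1$ and $m_1/m_3$, each time with the cross-over pattern that the gyrolength measured from $\mathbf{a}_j$ is proportional to the weight $m_k$ of the \emph{other} endpoint. Multiplying the three ratios,
\[
\frac{m_2}{m_1}\cdot\frac{m_3}{m_2}\cdot\frac{m_1}{m_3}=1,
\]
which is exactly the asserted gyro-Ceva identity. I expect the main obstacle to be the second step together with the gamma-factor identity: establishing that the cevian feet carry the reduced gyrobarycentric coordinates, and verifying the representation $\gamma_{\ominus\mathbf{a}_i\oplus\mathbf{P}}=m_0^{-1}\sum_j m_j\gamma_{\ominus\mathbf{a}_i\oplus\mathbf{a}_j}$, both of which rest on the gyroalgebra of Einstein addition (the gamma-factor product rule and the behaviour of $\oplus$ under the barycentric combination) rather than on the final cancellation, which is purely formal.
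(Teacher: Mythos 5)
A point of comparison first: this paper does not prove the statement at all --- Theorem 1 is quoted from Ungar's book (the citation ``[2, pp 461]'') and is used only as a black box in the proof of the paper's main result (Theorem 3). So your proposal cannot be checked against a proof in the paper; it has to stand on its own, measured against Ungar's framework.

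On its merits, your gyrobarycentric argument is correct in substance, and the two steps you flag as the real content are both true and both reduce to the single Einstein-addition identity $\gamma_{\ominus\mathbf{u}\oplus\mathbf{v}}=\gamma_{\mathbf{u}}\gamma_{\mathbf{v}}(1-\frac{\mathbf{u}\cdot\mathbf{v}}{s^{2}})$. Writing $P=N/D$ with $N=\sum_{j}m_{j}\gamma_{\mathbf{a}_{j}}\mathbf{a}_{j}$ and $D=\sum_{j}m_{j}\gamma_{\mathbf{a}_{j}}$, one gets $\gamma_{P}=D/m_{0}$ where $m_{0}^{2}=D^{2}-\|N\|^{2}/s^{2}=\sum_{j}m_{j}^{2}+2\sum_{j<k}m_{j}m_{k}\gamma_{\ominus\mathbf{a}_{j}\oplus\mathbf{a}_{k}}$, and then $\gamma_{\ominus\mathbf{x}\oplus P}=\gamma_{\mathbf{x}}\gamma_{P}(1-\mathbf{x}\cdot P/s^{2})=\frac{1}{m_{0}}\sum_{j}m_{j}\gamma_{\ominus\mathbf{x}\oplus\mathbf{a}_{j}}$, which is exactly your representation identity. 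The reduced-coordinate claim for the cevian feet is immediate once you observe that the gyrobarycentric combination is an ordinary Euclidean affine combination with masses $m_{i}\gamma_{\mathbf{a}_{i}}$ and that Einstein gyrolines are Euclidean chords of the ball (Beltrami--Klein model): the point with coordinates $(0:m_{2}:m_{3})$ lies on the chord $\mathbf{a}_{2}\mathbf{a}_{3}$ and on the chord through $\mathbf{a}_{1}$ and $\mathbf{a}_{123}$, hence it is $\mathbf{a}_{23}$. With these two facts in place, your computation of the three ratios and the cancellation $(m_{2}/m_{1})(m_{3}/m_{2})(m_{1}/m_{3})=1$ is a complete and correct proof; it is the ``hyperbolic barycentric calculus'' route, in the same spirit as (though packaged differently from) Ungar's own development, which can equivalently be phrased as a reduction to the Euclidean Ceva theorem in Klein coordinates with gamma-weighted ratios.

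The one genuine restriction is your standing assumption $m_{i}>0$: that covers only points $\mathbf{a}_{123}$ interior to the gyrotriangle, whereas the theorem allows any point of the gyroplane off the three gyrolines. For an exterior point some weights are negative; the identical algebra still gives each ratio as $|m_{k}|/|m_{j}|$ (the quantities $\gamma_{\mathbf{v}}\|\mathbf{v}\|$ are positive and the squares $m_{k}^{2}(\gamma_{jk}^{2}-1)/m_{0}^{2}$ kill the signs), so the product is again $1$, provided --- as the theorem hypothesizes --- the three intersection points exist in the ball. You should either carry absolute values throughout or state explicitly that you are treating the interior case; as written, the proof is complete only for interior $\mathbf{a}_{123}$.
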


(see [2, pp 461])

\begin{theorem}
\textbf{(The Hyperbolic Theorem of Menelaus in Einstein Gyrovector Space) }%
Let $\mathbf{a}_{1},\mathbf{a}_{2},$ and $\mathbf{a}_{3}$ be three
non-gyrocollinear points in an Einstein gyrovector space $(V_{s},\oplus
,\otimes ).$ If a gyroline meets the sides of gyrotriangle $\mathbf{a}_{1}%
\mathbf{a}_{2}\mathbf{a}_{3}$\textbf{\ }at points $\mathbf{a}_{12},\mathbf{a}%
_{13},\mathbf{a}_{23,}$ then 
\begin{equation*}
\frac{\gamma _{\ominus \mathbf{a}_{1}\oplus \mathbf{a}_{12}}\left\Vert
\ominus \mathbf{a}_{1}\oplus \mathbf{a}_{12}\right\Vert }{\gamma _{\ominus 
\mathbf{a}_{2}\oplus \mathbf{a}_{12}}\left\Vert \ominus \mathbf{a}_{2}\oplus 
\mathbf{a}_{12}\right\Vert }\frac{\gamma _{\ominus \mathbf{a}_{2}\oplus 
\mathbf{a}_{23}}\left\Vert \ominus \mathbf{a}_{2}\oplus \mathbf{a}%
_{23}\right\Vert }{\gamma _{\ominus \mathbf{a}_{3}\oplus \mathbf{a}%
_{23}}\left\Vert \ominus \mathbf{a}_{3}\oplus \mathbf{a}_{23}\right\Vert }%
\frac{\gamma _{\ominus \mathbf{a}_{3}\oplus \mathbf{a}_{13}}\left\Vert
\ominus \mathbf{a}_{3}\oplus \mathbf{a}_{13}\right\Vert }{\gamma _{\ominus 
\mathbf{a}_{1}\oplus \mathbf{a}_{13}}\left\Vert \ominus \mathbf{a}_{1}\oplus 
\mathbf{a}_{13}\right\Vert }=1 
\end{equation*}
\end{theorem}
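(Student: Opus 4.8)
The plan is to work in gyrobarycentric coordinates relative to the gyrotriangle $\mathbf{a}_{1}\mathbf{a}_{2}\mathbf{a}_{3}$ and to collapse each of the three factors of the product into a ratio of barycentric weights. The decisive preliminary observation is that the defining relation $\gamma_{\mathbf{v}}^{2}(1-\|\mathbf{v}\|^{2}/s^{2})=1$ of the gamma factor rearranges into
\begin{equation*}
\gamma_{\mathbf{v}}\|\mathbf{v}\|=s\sqrt{\gamma_{\mathbf{v}}^{2}-1}.
\end{equation*}
Applied to each of the six gyrodifferences $\ominus\mathbf{a}_{i}\oplus\mathbf{a}_{jk}$, this turns each factor of the product into a ratio of the form $\sqrt{\gamma_{\ominus\mathbf{a}_{i}\oplus\mathbf{a}_{jk}}^{2}-1}/\sqrt{\gamma_{\ominus\mathbf{a}_{j}\oplus\mathbf{a}_{jk}}^{2}-1}$, so that from here on only gamma factors, and no norms, have to be controlled.

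Next I would record each intersection point through its gyrobarycentric representation. A point $P$ on the gyroline $\mathbf{a}_{i}\mathbf{a}_{j}$ can be written as $P=(m_{i}\gamma_{\mathbf{a}_{i}}\mathbf{a}_{i}+m_{j}\gamma_{\mathbf{a}_{j}}\mathbf{a}_{j})/(m_{i}\gamma_{\mathbf{a}_{i}}+m_{j}\gamma_{\mathbf{a}_{j}})$ with weights $m_{i},m_{j}$. Using the Einstein gamma identity $\gamma_{\ominus\mathbf{u}\oplus\mathbf{v}}=\gamma_{\mathbf{u}}\gamma_{\mathbf{v}}(1-\mathbf{u}\cdot\mathbf{v}/s^{2})$ together with $\ominus\mathbf{u}=-\mathbf{u}$, a direct computation yields, with $\gamma_{ij}:=\gamma_{\ominus\mathbf{a}_{i}\oplus\mathbf{a}_{j}}$,
\begin{equation*}
\gamma_{\ominus\mathbf{a}_{i}\oplus P}=\frac{m_{i}+m_{j}\gamma_{ij}}{\sqrt{m_{i}^{2}+m_{j}^{2}+2m_{i}m_{j}\gamma_{ij}}},
\end{equation*}
and the mirror-image formula for $\gamma_{\ominus\mathbf{a}_{j}\oplus P}$. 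Subtracting $1$ from the squares, the common denominator together with the factor $\gamma_{ij}^{2}-1$ cancels, and I obtain the clean ratio
\begin{equation*}
\frac{\gamma_{\ominus\mathbf{a}_{i}\oplus P}\|\ominus\mathbf{a}_{i}\oplus P\|}{\gamma_{\ominus\mathbf{a}_{j}\oplus P}\|\ominus\mathbf{a}_{j}\oplus P\|}=\frac{|m_{j}|}{|m_{i}|}.
\end{equation*}

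To assemble the three pieces I would use that, in gyrobarycentric coordinates, the transversal gyroline is the zero locus of a single linear form $l_{1}m_{1}+l_{2}m_{2}+l_{3}m_{3}=0$. Intersecting this locus with the three sides (successively setting $m_{3}=0$, $m_{1}=0$, $m_{2}=0$) fixes the weight pairs of $\mathbf{a}_{12}$, $\mathbf{a}_{23}$, $\mathbf{a}_{13}$ as $(m_{1}:m_{2})=(l_{2}:-l_{1})$, $(m_{2}:m_{3})=(l_{3}:-l_{2})$, $(m_{3}:m_{1})=(l_{1}:-l_{3})$. Feeding these into the ratio displayed above, the three factors become $|m_{2}|/|m_{1}|$, $|m_{3}|/|m_{2}|$, $|m_{1}|/|m_{3}|$; every $l_{k}$ cancels and the product equals $1$. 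The signed Euclidean analogue would give $-1$, but the norms $\|\cdot\|$ render each factor positive, which is precisely the assertion.

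The step I expect to be the real obstacle is the second one: pinning down the gyrobarycentric gamma-factor formula with its normalization $\sqrt{m_{i}^{2}+m_{j}^{2}+2m_{i}m_{j}\gamma_{ij}}$, since this rests on a careful expansion of $\gamma_{\ominus\mathbf{a}_{i}\oplus P}$ and on the interaction between Einstein addition and the ambient inner product. I would either invoke the gyrobarycentric-coordinate theory for Einstein gyrovector spaces from [2], or verify the two-vertex case by hand, checking the boundary values $P=\mathbf{a}_{i}$ and $P=\mathbf{a}_{j}$ as a sanity test. A coordinate-free alternative replaces the middle two steps by the gyrotriangle law of sines applied to the sub-gyrotriangles cut off by the transversal, trading barycentric bookkeeping for a handful of gyrotrigonometric identities.
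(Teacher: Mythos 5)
The first thing to note is that the paper itself contains no proof of this statement: Theorem 2 is quoted with a bare citation to [2, pp 463] and is then used as a black box in the proof of Theorem 3, so there is no argument in the text to compare yours against; the relevant comparison is with Ungar's proof in the cited book.

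Judged on its own, your proposal is correct, and the step you flagged as the likely obstacle does go through exactly as you hoped. Writing $P$ on the gyroline through $\mathbf{a}_i$ and $\mathbf{a}_j$ with weights $(m_i,m_j)$, and using $\gamma _{\ominus \mathbf{u}\oplus \mathbf{v}}=\gamma _{\mathbf{u}}\gamma _{\mathbf{v}}(1-\mathbf{u}\cdot \mathbf{v}/s^{2})$ together with $\gamma _{\mathbf{u}}^{2}\left\Vert \mathbf{u}\right\Vert ^{2}=s^{2}(\gamma _{\mathbf{u}}^{2}-1)$ and $\gamma _{\mathbf{a}_i}\gamma _{\mathbf{a}_j}\,\mathbf{a}_i\cdot \mathbf{a}_j=s^{2}(\gamma _{\mathbf{a}_i}\gamma _{\mathbf{a}_j}-\gamma _{ij})$, one finds $1-\left\Vert P\right\Vert ^{2}/s^{2}=m_{0}^{2}/N^{2}$ with $N=m_i\gamma _{\mathbf{a}_i}+m_j\gamma _{\mathbf{a}_j}$ and $m_{0}^{2}=m_i^{2}+m_j^{2}+2m_im_j\gamma _{ij}$, and then $\gamma _{\ominus \mathbf{a}_i\oplus P}=(m_i+m_j\gamma _{ij})/m_{0}$; subtracting $1$ from its square yields $m_j^{2}(\gamma _{ij}^{2}-1)/m_{0}^{2}$, hence the clean ratio $|m_j|/|m_i|$. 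The one assertion you should not wave at is that the transversal is the zero set of a linear form in the coordinates $(m_1:m_2:m_3)$. That is not a formal triviality of gyrogroup algebra; it is where the special feature of Einstein (as opposed to, say, M\"{o}bius) gyrovector spaces enters, namely that gyrolines are Euclidean chords of the ball (Beltrami--Klein model), combined with the observation that your weights differ from Euclidean barycentric weights only by the positive rescaling $\mu _k=m_k\gamma _{\mathbf{a}_k}$, which preserves linearity of line equations. You also implicitly need $l_k\neq 0$ for each $k$, i.e.\ that the transversal misses the vertices, which is part of the Menelaus configuration. With these points made explicit, the telescoping product $(|l_1|/|l_2|)(|l_2|/|l_3|)(|l_3|/|l_1|)=1$ closes the proof. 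In substance your route is an algebraized version of the one the paper defers to: the proof in [2] likewise rests on the Klein-model straightness of Einstein gyrolines to reduce the statement to its Euclidean counterpart, with the vertex gamma factors $\gamma _{\mathbf{a}_k}$ cancelling in the cyclic product exactly as your coefficients $l_k$ do; what your version buys is a self-contained computation that never has to quote the Euclidean Menelaus theorem, at the cost of importing (or reproving) the gyrobarycentric representation machinery.
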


(see [2, pp 463])

For further details we refer to the recent book of A.Ungar [2].

\bigskip

\textbf{2. Main result}

\bigskip

\bigskip In this section, we present a proof of Smarandache's cevian
triangle hyperbolic theorem in the Einstein relativistic velocity model of
hyperbolic geometry.

\begin{theorem}
If $A_{1}B_{1}C_{1}$ is the cevian gyrotriangle of gyropoint $P$ with
respect to the gyrotriangle $ABC,$ then%
\begin{equation*}
\frac{\gamma _{_{\left\vert PA\right\vert }\left\vert PA\right\vert }}{%
\gamma _{_{\left\vert PA_{1}\right\vert }\left\vert PA_{1}\right\vert }}%
\cdot \frac{\gamma _{_{\left\vert PB\right\vert }\left\vert PB\right\vert }}{%
\gamma _{_{\left\vert PB_{1}\right\vert }\left\vert PB_{1}\right\vert }}%
\cdot \frac{\gamma _{_{\left\vert PC\right\vert }\left\vert PC\right\vert }}{%
\gamma _{_{\left\vert PC_{1}\right\vert }\left\vert PC_{1}\right\vert }}=%
\frac{\gamma _{_{\left\vert AB\right\vert }\left\vert AB\right\vert }\cdot
\gamma _{_{\left\vert BC\right\vert }\left\vert BC\right\vert }\cdot \gamma
_{_{\left\vert CA\right\vert }\left\vert CA\right\vert }}{\gamma
_{_{\left\vert AB_{1}\right\vert }\left\vert AB_{1}\right\vert }\cdot \gamma
_{_{\left\vert BC_{1}\right\vert }\left\vert BC_{1}\right\vert }\cdot \gamma
_{_{\left\vert CA_{1}\right\vert }\left\vert CA_{1}\right\vert }}. 
\end{equation*}
\end{theorem}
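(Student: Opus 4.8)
The plan is to reduce everything to the two gyro-results already quoted, exactly as one reduces Smarandache's theorem to Ceva and Menelaus in the Euclidean plane. Throughout I would write $[XY]:=\gamma_{|XY|}|XY|$ for the gamma-weighted gyrolength of a gyrosegment $XY$; note that $[XY]=[YX]$, since the gyrodistance is symmetric and $\gamma$ depends only on its magnitude. Recall that in the cevian gyrotriangle $A_{1}$ lies on $BC$, $B_{1}$ on $CA$, $C_{1}$ on $AB$, and that the gyrolines $AA_{1},BB_{1},CC_{1}$ all pass through $P$.

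First I would isolate the three ratios on the left-hand side by applying the Hyperbolic Theorem of Menelaus (Theorem 2) three times, each time to a sub-gyrotriangle cut by a cevian gyroline through $P$. For $[PA]/[PA_{1}]$ I take the gyrotriangle $AA_{1}C$ and use the gyroline $BB_{1}$ as transversal: it meets the side $AA_{1}$ at $P$, the side $A_{1}C$ (which is the gyroline $BC$) at $B$, and the side $CA$ at $B_{1}$. Menelaus then yields
\[
\frac{[PA]}{[PA_{1}]}=\frac{[BC]\,[AB_{1}]}{[A_{1}B]\,[B_{1}C]}.
\]
Applying the same argument cyclically — to $BB_{1}A$ cut by $CC_{1}$, and to $CC_{1}B$ cut by $AA_{1}$ — gives $[PB]/[PB_{1}]=[CA][BC_{1}]/([B_{1}C][C_{1}A])$ and $[PC]/[PC_{1}]=[AB][CA_{1}]/([C_{1}A][A_{1}B])$.

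Next I would multiply the three identities. The numerator collects $[AB][BC][CA]\cdot[AB_{1}][BC_{1}][CA_{1}]$, while the denominator becomes $\big([A_{1}B][B_{1}C][C_{1}A]\big)^{2}$. At this point I invoke the Hyperbolic Theorem of Ceva (Theorem 1) for the concurrent cevians $AA_{1},BB_{1},CC_{1}$ through $P$, which in the present notation reads $[A_{1}B][B_{1}C][C_{1}A]=[AB_{1}][BC_{1}][CA_{1}]$. Substituting this once in the denominator cancels the extra factor $[AB_{1}][BC_{1}][CA_{1}]$ in the numerator, leaving $[AB][BC][CA]/\big([A_{1}B][B_{1}C][C_{1}A]\big)$; a final use of the same Ceva identity rewrites the denominator as $[AB_{1}][BC_{1}][CA_{1}]$, which is precisely the claimed right-hand side.

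The genuinely delicate step is the bookkeeping in the three Menelaus applications: for each sub-gyrotriangle one must correctly match the vertex labels $a_{1},a_{2},a_{3}$ of Theorem 2 to $A,A_{1},C$ (and their cyclic analogues) and identify which side each of $P,B,B_{1}$ lies on, so that the ratios emerge with the intended orientation; a single mismatch inverts a factor and destroys the cancellation. The symmetry $[XY]=[YX]$ together with the fact that the three cevians are legitimate transversals — guaranteed by $P$ being off the three gyrolines — are what make the substitutions clean, after which the remaining manipulation is purely formal.
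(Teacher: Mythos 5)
Your proof is correct, and it rests on the same two pillars as the paper's: the hyperbolic Menelaus and Ceva theorems quoted as Theorems 2 and 1. The only difference is the decomposition. The paper applies Menelaus to the gyrotriangles $AA_{1}B$, $BB_{1}C$, $CC_{1}A$ cut by the gyrolines $CC_{1}$, $AA_{1}$, $BB_{1}$ respectively, and must then divide each resulting identity by the Ceva relation in $ABC$ (its relation (1)) to isolate the three cevian ratios; you instead cut $AA_{1}C$ by $BB_{1}$, $BB_{1}A$ by $CC_{1}$, and $CC_{1}B$ by $AA_{1}$, which produces each ratio in a single Menelaus application, e.g.
\begin{equation*}
\frac{\gamma _{\left\vert PA\right\vert }\left\vert PA\right\vert }{\gamma
_{\left\vert PA_{1}\right\vert }\left\vert PA_{1}\right\vert }=\frac{\gamma
_{\left\vert BC\right\vert }\left\vert BC\right\vert \cdot \gamma
_{\left\vert AB_{1}\right\vert }\left\vert AB_{1}\right\vert }{\gamma
_{\left\vert A_{1}B\right\vert }\left\vert A_{1}B\right\vert \cdot \gamma
_{\left\vert B_{1}C\right\vert }\left\vert B_{1}C\right\vert }.
\end{equation*}
In fact your three intermediate identities coincide, up to the symmetry of the gamma-weighted gyrolength in its two endpoints, with the paper's relations (5)--(7); you simply reach them without the division step, so Ceva enters your argument only at the end (used twice: once to cancel the factor coming from $AB_{1},BC_{1},CA_{1}$ against one copy of the squared denominator, once to put the final denominator in the stated form), whereas the paper invokes it once up front and again implicitly at the end. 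Your routing is slightly more economical, and your attribution of the two lemmas is actually more accurate than the paper's own text, which swaps its references (calling the Ceva-type relation (1) an application of ``theorem 2'' and the Menelaus applications (2)--(4) applications of ``theorem 1''). Both arguments share the same implicit prerequisites, which you rightly flag: the symmetry $\gamma _{\left\vert XY\right\vert }\left\vert XY\right\vert =\gamma _{\left\vert YX\right\vert }\left\vert YX\right\vert $, and the reading of Theorem 2 in which the transversal may meet a side-line outside the gyrosegment (in your first application $B$ lies on the gyroline $A_{1}C$ but outside the gyrosegment $A_{1}C$, exactly as $C$ does in the paper's application to $AA_{1}B$).
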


\begin{proof}
If we use a theorem 2 in the gyrotriangle $ABC$ (see Figure), we have%
\begin{equation}
\gamma _{_{\left\vert AC_{1}\right\vert }\left\vert AC_{1}\right\vert }\cdot
\gamma _{_{\left\vert BA_{1}\right\vert }\left\vert BA_{1}\right\vert }\cdot
\gamma _{_{\left\vert CB_{1}\right\vert }\left\vert CB_{1}\right\vert
}=\gamma _{_{\left\vert AB_{1}\right\vert }\left\vert AB_{1}\right\vert
}\cdot \gamma _{_{\left\vert BC_{1}\right\vert }\left\vert BC_{1}\right\vert
}\cdot \gamma _{_{\left\vert CA_{1}\right\vert }\left\vert CA_{1}\right\vert
}
\end{equation}%
If we use a theorem 1 in the gyrotriangle $AA_{1}B,$ cut by the gyroline $%
CC_{1},$ we get 
\begin{equation}
\gamma _{_{\left\vert AC_{1}\right\vert }\left\vert AC_{1}\right\vert }\cdot
\gamma _{_{\left\vert BC\right\vert }\left\vert BC\right\vert }\cdot \gamma
_{_{\left\vert A_{1}P\right\vert }\left\vert A_{1}P\right\vert }=\gamma
_{_{\left\vert AP\right\vert }\left\vert AP\right\vert }\cdot \gamma
_{_{\left\vert A_{1}C\right\vert }\left\vert A_{1}C\right\vert }\cdot \gamma
_{_{\left\vert BC_{1}\right\vert }\left\vert BC_{1}\right\vert }.
\end{equation}%
If we use a theorem 1 in the gyrotriangle $BB_{1}C,$ cut by the gyroline $%
AA_{1},$ we get 
\begin{equation}
\gamma _{_{\left\vert BA_{1}\right\vert }\left\vert BA_{1}\right\vert }\cdot
\gamma _{_{\left\vert CA\right\vert }\left\vert CA\right\vert }\cdot \gamma
_{_{\left\vert B_{1}P\right\vert }\left\vert B_{1}P\right\vert }=\gamma
_{_{\left\vert BP\right\vert }\left\vert BP\right\vert }\cdot \gamma
_{_{\left\vert B_{1}A\right\vert }\left\vert B_{1}A\right\vert }\cdot \gamma
_{_{\left\vert CA_{1}\right\vert }\left\vert CA_{1}\right\vert }.
\end{equation}%
If we use a theorem 1 in the gyrotriangle $CC_{1}A,$ cut by the gyroline $%
BB_{1},$ we get 
\begin{equation}
\gamma _{_{\left\vert CB_{1}\right\vert }\left\vert CB_{1}\right\vert }\cdot
\gamma _{_{\left\vert AB\right\vert }\left\vert AB\right\vert }\cdot \gamma
_{_{\left\vert C_{1}P\right\vert }\left\vert C_{1}P\right\vert }=\gamma
_{_{\left\vert CP\right\vert }\left\vert CP\right\vert }\cdot \gamma
_{_{\left\vert C_{1}B\right\vert }\left\vert C_{1}B\right\vert }\cdot \gamma
_{_{\left\vert AB_{1}\right\vert }\left\vert AB_{1}\right\vert }.
\end{equation}%
We divide each relation (2), (3), and (4) by relation (1), and we obtain%
\begin{equation}
\frac{\gamma _{_{\left\vert PA\right\vert }\left\vert PA\right\vert }}{%
\gamma _{_{\left\vert PA_{1}\right\vert }\left\vert PA_{1}\right\vert }}=%
\frac{\gamma _{_{\left\vert BC\right\vert }\left\vert BC\right\vert }}{%
\gamma _{_{\left\vert BA_{1}\right\vert }\left\vert BA_{1}\right\vert }}%
\cdot \frac{\gamma _{_{\left\vert B_{1}A\right\vert }\left\vert
B_{1}A\right\vert }}{\gamma _{_{\left\vert B_{1}C\right\vert }\left\vert
B_{1}C\right\vert }},
\end{equation}

\begin{equation}
\frac{\gamma _{_{\left\vert PB\right\vert }\left\vert PB\right\vert }}{%
\gamma _{_{\left\vert PB_{1}\right\vert }\left\vert PB_{1}\right\vert }}=%
\frac{\gamma _{_{\left\vert CA\right\vert }\left\vert CA\right\vert }}{%
\gamma _{_{\left\vert CB_{1}\right\vert }\left\vert CB_{1}\right\vert }}%
\cdot \frac{\gamma _{_{\left\vert C_{1}B\right\vert }\left\vert
C_{1}B\right\vert }}{\gamma _{_{\left\vert C_{1}A\right\vert }\left\vert
C_{1}A\right\vert }},
\end{equation}

\begin{equation}
\frac{\gamma _{_{\left\vert PC\right\vert }\left\vert PC\right\vert }}{%
\gamma _{_{\left\vert PC_{1}\right\vert }\left\vert PC_{1}\right\vert }}=%
\frac{\gamma _{_{\left\vert AB\right\vert }\left\vert AB\right\vert }}{%
\gamma _{_{\left\vert AC_{1}\right\vert }\left\vert AC_{1}\right\vert }}%
\cdot \frac{\gamma _{_{\left\vert A_{1}C\right\vert }\left\vert
A_{1}C\right\vert }}{\gamma _{_{\left\vert A_{1}B\right\vert }\left\vert
A_{1}B\right\vert }}.
\end{equation}%
Multiplying (5) by (6) and by (7), we have 
\begin{equation*}
\frac{\gamma _{_{\left\vert PA\right\vert }\left\vert PA\right\vert }}{%
\gamma _{_{\left\vert PA_{1}\right\vert }\left\vert PA_{1}\right\vert }}%
\cdot \frac{\gamma _{_{\left\vert PB\right\vert }\left\vert PB\right\vert }}{%
\gamma _{_{\left\vert PB_{1}\right\vert }\left\vert PB_{1}\right\vert }}%
\cdot \frac{\gamma _{_{\left\vert PC\right\vert }\left\vert PC\right\vert }}{%
\gamma _{_{\left\vert PC_{1}\right\vert }\left\vert PC_{1}\right\vert }}= 
\end{equation*}%
\begin{equation}
\frac{\gamma _{_{\left\vert AB\right\vert }\left\vert AB\right\vert }\cdot
\gamma _{_{\left\vert BC\right\vert }\left\vert BC\right\vert }\cdot \gamma
_{_{\left\vert CA\right\vert }\left\vert CA\right\vert }}{\gamma
_{_{\left\vert A_{1}B\right\vert }\left\vert A_{1}B\right\vert }\cdot \gamma
_{_{\left\vert B_{1}C\right\vert }\left\vert B_{1}C\right\vert }\cdot \gamma
_{_{\left\vert C_{1}A\right\vert }\left\vert C_{1}A\right\vert }}\cdot \frac{%
\gamma _{_{\left\vert B_{1}A\right\vert }\left\vert B_{1}A\right\vert }\cdot
\gamma _{_{\left\vert C_{1}B\right\vert }\left\vert C_{1}B\right\vert }\cdot
\gamma _{_{\left\vert A_{1}C\right\vert }\left\vert A_{1}C\right\vert }}{%
\gamma _{_{\left\vert A_{1}B\right\vert }\left\vert A_{1}B\right\vert }\cdot
\gamma _{_{\left\vert B_{1}C\right\vert }\left\vert B_{1}C\right\vert }\cdot
\gamma _{_{\left\vert C_{1}A\right\vert }\left\vert C_{1}A\right\vert }}
\end{equation}%
From the relation (1) we have 
\begin{equation}
\frac{\gamma _{_{\left\vert B_{1}A\right\vert }\left\vert B_{1}A\right\vert
}\cdot \gamma _{_{\left\vert C_{1}B\right\vert }\left\vert C_{1}B\right\vert
}\cdot \gamma _{_{\left\vert A_{1}C\right\vert }\left\vert A_{1}C\right\vert
}}{\gamma _{_{\left\vert A_{1}B\right\vert }\left\vert A_{1}B\right\vert
}\cdot \gamma _{_{\left\vert B_{1}C\right\vert }\left\vert B_{1}C\right\vert
}\cdot \gamma _{_{\left\vert C_{1}A\right\vert }\left\vert C_{1}A\right\vert
}}=1,
\end{equation}%
so%
\begin{equation*}
\frac{\gamma _{_{\left\vert PA\right\vert }\left\vert PA\right\vert }}{%
\gamma _{_{\left\vert PA_{1}\right\vert }\left\vert PA_{1}\right\vert }}%
\cdot \frac{\gamma _{_{\left\vert PB\right\vert }\left\vert PB\right\vert }}{%
\gamma _{_{\left\vert PB_{1}\right\vert }\left\vert PB_{1}\right\vert }}%
\cdot \frac{\gamma _{_{\left\vert PC\right\vert }\left\vert PC\right\vert }}{%
\gamma _{_{\left\vert PC_{1}\right\vert }\left\vert PC_{1}\right\vert }}=%
\frac{\gamma _{_{\left\vert AB\right\vert }\left\vert AB\right\vert }\cdot
\gamma _{_{\left\vert BC\right\vert }\left\vert BC\right\vert }\cdot \gamma
_{_{\left\vert CA\right\vert }\left\vert CA\right\vert }}{\gamma
_{_{\left\vert AB_{1}\right\vert }\left\vert AB_{1}\right\vert }\cdot \gamma
_{_{\left\vert BC_{1}\right\vert }\left\vert BC_{1}\right\vert }\cdot \gamma
_{_{\left\vert CA_{1}\right\vert }\left\vert CA_{1}\right\vert }}. 
\end{equation*}
\end{proof}

\bigskip


\begin{thebibliography}{9}
\bibitem{1} Smarandache, F, \textit{Eight Solved and Eight Open Problems in
Elementary Geometry}, in arXiv.org.

\bibitem{2} Ungar, A.A., \textit{Analytic Hyperbolic Geometry and Albert
Einstein's Special Theory of Relativity}, Hackensack, NJ:World Scientific
Publishing Co.Pte. Ltd., 2008.
\end{thebibliography}
\end{document}